\definecolor{webgreen}{rgb}{0,.5,0}
\definecolor{webbrown}{rgb}{.6,0,0}
\newcommand{\seqnum}[1]{\href{http://oeis.org/#1}{\underline{#1}}}
\begin{document}
	
	\newenvironment{dedication}
	{\vspace{6ex}\begin{quotation}\begin{center}\begin{em}}
				{\par\end{em}\end{center}\end{quotation}}
			
	\theoremstyle{plain}
	\newtheorem{theorem}{Theorem}
	\newtheorem{corollary}[theorem]{Corollary}
	\newtheorem{lemma}[theorem]{Lemma}
	\newtheorem{proposition}[theorem]{Proposition}
	
	\theoremstyle{definition}
	\newtheorem{definition}[theorem]{Definition}
	\newtheorem{example}[theorem]{Example}
	\newtheorem{conjecture}[theorem]{Conjecture}
	
	\theoremstyle{remark}
	\newtheorem{remark}[theorem]{Remark}
	
	\begin{center}
		\vskip 1cm{\LARGE\bf Transcendental Infinite Products Associated with the $\pm 1$ Thue-Morse Sequence}
		\vskip 1cm
		\large
		L\'aszl\'o T\'oth\\
		Rue des Tanneurs 7 \\
		L-6790 Grevenmacher \\
		Grand Duchy of Luxembourg \\
		\href{mailto:uk.laszlo.toth@gmail.com}{\tt uk.laszlo.toth@gmail.com}
	\end{center}
	\begin{dedication}
		To my daughter Ali\'enor, on the occasion of her first birthday
	\end{dedication}
	\vskip .2 in
	
	\begin{abstract}
		Infinite products associated with the $\pm 1$ Thue-Morse sequence whose value is rational or algebraic
		irrational have been studied by several authors. In this short note we prove three new infinite product identities involving $\pi$, $\sqrt2$ and the $\pm 1$ Thue-Morse sequence, building upon a result by Allouche, Riasat and Shallit. We then use our method to find a new expression for a product appearing within the context of the Flajolet-Martin constant.
	\end{abstract}

	\section{Introduction}
	
	Let $\epsilon_n$ denote the Thue-Morse sequence on the alphabet $\{1,-1\}$. Thus, $\epsilon_n$ begins $1,-1,-1,1,$ $-1,1,1,-1,\ldots$. Infinite products involving $\epsilon_n$ have been extensively studied in literature; perhaps the most well-known is the Woods-Robbins product,
	\begin{displaymath}
	\prod_{n\geq0} \left( \frac{2n+1}{2n+2} \right) ^{\epsilon_n} = \frac{\sqrt2}{2}.
	\end{displaymath}
	There are many ways to prove this identity, for instance by splitting the product $\prod_{n\geq1} \left( \frac{n}{n+1} \right) ^{\epsilon_n}$ into odd and even indexes then using the relations $\epsilon_{2n}=\epsilon_n$ and $\epsilon_{2n+1}=-\epsilon_n$ (Allouche, Mend\`es France and Peyri\`ere \cite{Allouche00}), or by analytically continuing then differentiating the Dirichlet series $\sum_{n\geq0} \frac{\epsilon_n}{(n+1)^s}$ at $s=0$ (Allouche and Cohen \cite{AlloucheCohen85}). Numerous generalizations of the identity above are available. In a $1987$ paper, Allouche, Cohen, Mend\`es France and Shallit \cite{AlloucheShallit87} prove identities of the type 
	\begin{align*}
	&\prod_{n\geq0} \left( \frac{2n+1}{2n+2} \right) ^{-(3/2)^{t_n}} = 2^{-2/5}, \\
	&\prod_{n\geq0} \left( \frac{2n+1}{2n+2} \right) ^{t_n\epsilon_n} = 2^{1/4},
	\end{align*}
	where $t_n$ denotes the usual Thue-Morse sequence. More recently, Allouche and Sondow \cite{AlloucheSondow08} proved that
	\begin{displaymath}
	\prod_{n\geq0} \left( \frac{Bn+1}{Bn+2} \right) ^{(-1)^{t_B(n)}} = \frac{1}{\sqrt B},
	\end{displaymath}
	where $t_B(n)$ denotes the number of ones in the $B$-ary expansion of the integer $n$. Other ``curious'' infinite product identities have been found. Allouche, Riasat and Shallit \cite{Riasat19} prove an abundance of identities associated with $\epsilon_n$, such as 
	\begin{align*}
	&\prod_{n\geq0} \left( \frac{4n+1}{4n+3} \right) ^{\epsilon_n} = \frac{1}{2}, \\
	&\prod_{n\geq0} \left( \frac{(n+1)(4n+5)}{(n+2)(4n+3)} \right) ^{\epsilon_n} = 1,
	\end{align*}
	as well as others involving $t_n$ and the Gamma function, such as 
	\begin{displaymath}
	\prod_{n\geq0} \left( \frac{(4n+1)(4n+4)}{(4n+2)(4n+3)} \right) ^{t_n} = \frac{\pi^{3/4} \sqrt2}{\Gamma(1/4)}
	\end{displaymath}
	and many others. 
	
	No transcendental valued infinite product of this type with exponents equal to the $\pm1$ Thue-Morse sequence is currently known. In this paper, we prove two new infinite product identities involving this sequence:
	\begin{theorem} \label{theo-prod}
		We have
	\begin{align*}
	&\prod_{m\geq1} \prod_{n\geq1} \left( \frac{(4 m^2 + n - 2) (4 m^2 + 2 n - 1)^2}{4 (2 m^2 + n - 1) (4 m^2 + n - 1) (2 m^2 + n)} \right) ^{\epsilon_n} = \frac{\pi}{2}, \\
	&\prod_{m\geq0} \prod_{n\geq1} \left( \frac{(16 m (m + 1) + n + 2) (16 m (m + 1) + 2 n + 3)^2}{4 (8 m (m + 1) + n + 1) (8 m (m + 1) + n + 2) (16 m (m + 1) + n + 3)} \right) ^{\epsilon_n} = \sqrt 2.
	\end{align*}
	\end{theorem}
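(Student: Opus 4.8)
The plan is to collapse the inner product over $n$ for each fixed $m$ by means of a Thue--Morse ``doubling'' lemma, leaving a short rational function of $m$, and then to recognise the resulting product over $m$ as a classical evaluation (Wallis in the first case, a Gauss $\Gamma$-product in the second). The engine throughout is the pair of relations $\epsilon_{2n}=\epsilon_n$, $\epsilon_{2n+1}=-\epsilon_n$ that drives the Allouche--Riasat--Shallit identities.

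\emph{Step 1 (the lemma).} I would first establish that, for a positive function $f$ for which the products below converge,
\[
\prod_{n\ge1}\left(\frac{f(n-1)\,f(2n)^2}{f(n)\,f(2n-1)\,f(2n+1)}\right)^{\epsilon_n}=\frac{f(1)}{f(0)}.
\]
Writing the bracket as $\bigl(f(n-1)/f(n)\bigr)\cdot\bigl(f(2n)/f(2n-1)\bigr)\cdot\bigl(f(2n)/f(2n+1)\bigr)$, splitting the index set of $\prod_{n\ge1}f(n)^{\epsilon_n}$ into even and odd indices and using the doubling relations yields the master identity $\prod_{n\ge1}\bigl(f(2n)/f(2n+1)\bigr)^{\epsilon_n}=f(1)\prod_{n\ge1}f(n)^{\epsilon_n}$; applying this to the shift $k\mapsto f(k-1)$ gives $\prod_{n\ge1}\bigl(f(2n)/f(2n-1)\bigr)^{\epsilon_n}=\bigl(f(0)\prod_{n\ge1}f(n-1)^{\epsilon_n}\bigr)^{-1}$; and trivially $\prod_{n\ge1}\bigl(f(n-1)/f(n)\bigr)^{\epsilon_n}=\prod f(n-1)^{\epsilon_n}\big/\prod f(n)^{\epsilon_n}$. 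Multiplying the three, the unknown products $\prod f(n)^{\epsilon_n}$ and $\prod f(n-1)^{\epsilon_n}$ cancel and only $f(1)/f(0)$ remains. Since those single products generally diverge in isolation, the rigorous version must be run on partial products $\prod_{n\le N}(\cdots)^{\epsilon_n}$: one checks that every ``interior'' factor $f(k)$ occurs with total exponent $0$ (this is where the doubling relations do all the work), that the factors near $k=1$ contribute exactly $f(1)/f(0)$, and that the surviving high-index factors reassemble into a tail of the absolutely convergent product $\prod_j\bigl(f(2j)^2/(f(2j-1)f(2j+1))\bigr)^{\epsilon_j}$ together with finitely many ratios $f(k)/f(k+1)$, all tending to $1$.

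\emph{Step 2 (matching, then the outer product).} Using $4(2m^2+n-1)(2m^2+n)=(4m^2+2n-2)(4m^2+2n)$, the first inner term equals the lemma's bracket with $f=g_m$, $g_m(k):=4m^2+k-1$; using $4(8m(m+1)+n+1)(8m(m+1)+n+2)=(16m(m+1)+2n+2)(16m(m+1)+2n+4)$, the second inner term is the bracket with $f=g_m$, $g_m(k):=16m(m+1)+k+3$. The lemma then gives the inner products
\[
\frac{g_m(1)}{g_m(0)}=\frac{4m^2}{4m^2-1}\ \ (m\ge1),\qquad
\frac{g_m(1)}{g_m(0)}=\frac{16m(m+1)+4}{16m(m+1)+3}=\frac{(4m+2)^2}{(4m+1)(4m+3)}\ \ (m\ge0),
\]
and the outer products are classical: $\prod_{m\ge1}\frac{4m^2}{4m^2-1}=\prod_{m\ge1}\frac{(2m)^2}{(2m-1)(2m+1)}=\frac{\pi}{2}$ by Wallis, while $\prod_{m\ge0}\frac{(4m+2)^2}{(4m+1)(4m+3)}=\prod_{m\ge0}\frac{(m+1/2)^2}{(m+1/4)(m+3/4)}=\frac{\Gamma(1/4)\Gamma(3/4)}{\Gamma(1/2)^2}=\frac{1}{\sin(\pi/4)}=\sqrt2$ by the Gauss product formula and Euler's reflection formula.

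\emph{Obstacle.} I expect essentially all the difficulty to lie in Step 1: legitimately manipulating products that diverge individually, i.e.\ showing that the ``boundary at infinity'' of the partial products is negligible. The only genuinely analytic input is that $\bigl|\sum_{n<N}\epsilon_n\bigr|$ is bounded (hence $\sum_n\epsilon_n n^{-s}$ converges for $\operatorname{Re}s>0$), which secures the conditional convergence of the sub-products above; everything else is bookkeeping with $\epsilon_{2n}=\epsilon_n$ and $\epsilon_{2n+1}=-\epsilon_n$ together with two textbook product formulas.
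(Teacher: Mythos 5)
Your proposal is correct and follows essentially the same route as the paper: the inner product over $n$ is collapsed by the Allouche--Riasat--Shallit identity (your Step 1 lemma applied with $f(k)=4m^2+k-1$, resp.\ $f(k)=16m(m+1)+k+3$, is exactly their Corollary 2.3(i) in the case $b=a+1$, which the paper simply cites as Lemma \ref{lem-shallit}), and the outer product over $m$ is then Wallis' product for $\pi/2$, resp.\ Catalan's product for $\sqrt 2$, which you evaluate equivalently via the Gauss $\Gamma$-product and the reflection formula. The only difference is that you re-derive the lemma from the doubling relations $\epsilon_{2n}=\epsilon_n$, $\epsilon_{2n+1}=-\epsilon_n$ rather than citing it, and your account of that derivation, including the care required because the individual sub-products diverge, matches the standard argument.
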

	We then obtain a third identity as a combination of the two above. Finally, we end this note with a few words on the Flajolet-Martin constant and use our methods to find another expression for the product
	$$
	\prod_{n\geq1} \left( \frac{(4n+1) (4n+2)}{4n (4n+3)} \right) ^{\epsilon_n}.
	$$	
	
	\section{Our results}
	
	We begin by recalling a result by Allouche, Riasat and Shallit \cite{Riasat19}, a key ingredient of our proof.
	\begin{lemma} \label{lem-shallit}
		Let $a,b\in\mathbb{C} \backslash \{-1,-2,-3,\ldots\}$. Then
		\begin{displaymath}
		\prod_{n\geq1} \left( \frac{(n+a)(2n+a+1)(2n+b)}{(2n+a)(n+b)(2n+b+1)} \right) ^{\epsilon_n} = \frac{b+1}{a+1}.
		\end{displaymath}
	\end{lemma}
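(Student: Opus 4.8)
The plan is to express the left-hand side through a single auxiliary infinite product that satisfies a simple functional equation, and then let that equation do all the work. First I would rewrite the general factor as
\[
\frac{(n+a)(2n+a+1)(2n+b)}{(2n+a)(n+b)(2n+b+1)} = \frac{n+a}{n+b}\cdot\frac{2n+a+1}{2n+b+1}\cdot\frac{2n+b}{2n+a} = 1+\frac{a-b}{n}+O(n^{-2}),
\]
which (since $\sum_{n\geq1}\epsilon_n/n$ converges) shows the product converges and splits into three convergent sub-products. Using $\tfrac{2n+c}{2n+d}=\tfrac{n+c/2}{n+d/2}$, all three can be written through the single product
\[
W(x):=\prod_{n\geq1}\left(\frac{n+x}{n}\right)^{\epsilon_n}\qquad(x\notin\{-1,-2,-3,\ldots\}),
\]
and one obtains
\[
\prod_{n\geq1}\left(\frac{(n+a)(2n+a+1)(2n+b)}{(2n+a)(n+b)(2n+b+1)}\right)^{\epsilon_n}=\frac{W(a)}{W(b)}\cdot\frac{W\!\big(\tfrac{a+1}{2}\big)}{W\!\big(\tfrac{b+1}{2}\big)}\cdot\frac{W\!\big(\tfrac b2\big)}{W\!\big(\tfrac a2\big)}.
\]
The hypothesis $a,b\notin\{-1,-2,-3,\ldots\}$ guarantees that $W$ is evaluated away from $\{-1,-2,-3,\ldots\}$ at each of these six arguments.

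The second step is the functional equation for $W$. Splitting the index $n$ of the defining product into $n=2m$ ($m\geq1$) and $n=2m+1$ ($m\geq0$) and using $\epsilon_{2m}=\epsilon_m$ and $\epsilon_{2m+1}=-\epsilon_m$, the even part is $\prod_{m\geq1}\big(\tfrac{2m+x}{2m}\big)^{\epsilon_m}=W\!\big(\tfrac x2\big)$, while the odd part is $\prod_{m\geq0}\big(\tfrac{2m+1+x}{2m+1}\big)^{-\epsilon_m}$, whose $m=0$ factor is $(1+x)^{-1}$ and whose tail, rewritten via $\tfrac{2m+1+x}{2m+1}=\tfrac{m+(x+1)/2}{m+1/2}$, equals $W\!\big(\tfrac12\big)\big/W\!\big(\tfrac{x+1}{2}\big)$. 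This gives
\[
W(x)=\frac{W\!\big(\tfrac12\big)}{1+x}\cdot\frac{W\!\big(\tfrac x2\big)}{W\!\big(\tfrac{x+1}{2}\big)},
\qquad\text{equivalently}\qquad
W(x)\,\frac{W\!\big(\tfrac{x+1}{2}\big)}{W\!\big(\tfrac x2\big)}=\frac{W\!\big(\tfrac12\big)}{1+x}
\]
(consistently, $x=0$ gives $W(0)=1$).

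Finally I would apply the second form once with $x=a$ and once with $x=b$. Regrouping the right-hand side above as $\dfrac{W(a)\,W(\tfrac{a+1}{2})/W(\tfrac a2)}{W(b)\,W(\tfrac{b+1}{2})/W(\tfrac b2)}$, the factors $W\!\big(\tfrac12\big)$ cancel and one is left with
\[
\frac{W(\tfrac12)/(1+a)}{W(\tfrac12)/(1+b)}=\frac{1+b}{1+a}=\frac{b+1}{a+1},
\]
which is the claimed value. (As a sanity check, $(a,b)=(0,1)$ turns the factor into $\big(\tfrac{2n+1}{2n+2}\big)^2$, so the identity becomes the square of $\prod_{n\geq1}\big(\tfrac{2n+1}{2n+2}\big)^{\epsilon_n}=\sqrt2$, i.e.\ the Woods-Robbins product with its $n=0$ term removed.)

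The one genuinely delicate point is the convergence bookkeeping rather than the algebra: one must check that every series $\sum_{n\geq1}\epsilon_n\log(1+y/n)$ that occurs (for $y\in\{a,b,\tfrac a2,\tfrac b2,\tfrac{a+1}{2},\tfrac{b+1}{2},\tfrac12\}$) converges, and that the even/odd splittings used above are legitimate. Both follow from $\log(1+y/n)=y/n+O(n^{-2})$ combined with the convergence of $\sum_{n\geq1}\epsilon_n/n$ (itself provable by a one-line even/odd manipulation of its partial sums); and once the even- and odd-indexed sub-series are known to converge separately, their sum automatically equals the whole series. With these preliminaries secured, the functional equation for $W$ carries the entire argument.
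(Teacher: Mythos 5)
Your proof is correct. Note that the paper itself does not prove this lemma at all: it simply quotes it as Corollary 2.3~(i) of Allouche, Riasat and Shallit \cite{Riasat19}, so you have supplied a genuine proof where the text offers only a citation. Your argument --- reducing everything to $W(x)=\prod_{n\geq1}\bigl(\tfrac{n+x}{n}\bigr)^{\epsilon_n}$ and deriving the functional equation $W(x)\,W\!\bigl(\tfrac{x+1}{2}\bigr)/W\!\bigl(\tfrac x2\bigr)=W\!\bigl(\tfrac12\bigr)/(1+x)$ from the even/odd splitting with $\epsilon_{2m}=\epsilon_m$, $\epsilon_{2m+1}=-\epsilon_m$ --- is essentially the standard technique in this literature and is the same mechanism underlying the proof in the cited reference, so it is the ``right'' proof rather than an exotic alternative. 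The bookkeeping is sound: the hypothesis $a,b\notin\{-1,-2,\dots\}$ does force all six auxiliary arguments $a/2$, $(a+1)/2$, $b/2$, $(b+1)/2$ (and $\tfrac12$) to avoid $\{-1,-2,\dots\}$, the convergence of each $\sum\epsilon_n\log(1+y/n)$ follows from Dirichlet's test via the bounded partial sums of $\epsilon_n$, and your observation that a convergent series whose even-indexed subseries converges splits correctly into even and odd parts is exactly what legitimizes the rearrangement. Your sanity check at $(a,b)=(0,1)$ also computes correctly to the square of the Woods--Robbins product with the $n=0$ term removed. The only trivial quibble: the functional equation at $x=0$ reduces to the tautology $W(0)=W(0)$ rather than actually ``giving'' $W(0)=1$ (which of course holds by inspection), so that parenthetical is a non-check; it does not affect anything.
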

	Note that this Lemma appears as Corollary 2.3 $(i)$ of the aforementioned work. We are now able to prove our identities presented earlier.
	\begin{proof} [Proof of Theorem \ref{theo-prod}]
		Let $m\geq1$ denote a positive integer. Now taking $a=4m^2-2$ and $b=4m^2-1$ with Lemma \ref{lem-shallit}, we have
		\begin{displaymath}
		\prod_{n\geq1} \left( \frac{(n+4m^2-2)(2n+4m^2-1)(2n+4m^2-1)}{(2n+4m^2-2)(n+4m^2-1)(2n+4m^2)} \right) ^{\epsilon_n} = \frac{4m^2}{4m^2-1}.
		\end{displaymath}
		Taking the product over all $m\geq1$ on both sides, we notice that the right hand side is Wallis' classic identity for $\pi/2$. Simplifying the left hand side yields the first identity as claimed. Now applying $a=(4m+1)(4m+3)-1$ and $b=(4m+2)^2-1$ to Lemma \ref{lem-shallit}, but this time for positive integer $m\geq0$, the product on the left hand side becomes
		\begin{displaymath}
		\prod_{n\geq1} \left( \frac{(n+(4m+1)(4m+3)-1) (2n+(4m+1)(4m+3)) (2n+(4m+2)^2-1)}{(2n+(4m+1)(4m+3)-1) (n+(4m+2)^2-1) (2n+(4m+2)^2)} \right) ^{\epsilon_n},
		\end{displaymath}
		while the right hand side, $\frac{(4m+2)^2}{(4m+1)(4m+3)}$, reminds us of Catalan's identity for $\sqrt 2$. Thus taking the product over all $m\geq0$ on both sides and simplifying the product expression, we prove the second claim.
	\end{proof}
	At this point one may notice a relationship between the first product and the second by looking at the terms for odd and even $m$. This leads us to a third identity.
	\begin{corollary}
		\begin{displaymath}
		\prod_{m\geq1} \prod_{n\geq1} \left( \frac{(16 m^2 + n - 2) (16 m^2 + 2 n - 1)^2}{4 (8 m^2 + n - 1) (16 m^2 + n - 1) (8 m^2 + n)} \right) ^{\epsilon_n} = \frac{\pi \sqrt2}{4}.
		\end{displaymath}
	\end{corollary}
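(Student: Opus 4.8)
The plan is to derive the Corollary as a combination of the two identities of Theorem~\ref{theo-prod}, following the observation made in the text about even and odd $m$. The point is that both identities of Theorem~\ref{theo-prod} come from Lemma~\ref{lem-shallit} applied with $a=K-2$, $b=K-1$ for a single parameter $K$: after writing $2n+K-2=2(n+K/2-1)$ and $2n+K=2(n+K/2)$, Lemma~\ref{lem-shallit} reads
\[
\prod_{n\geq1}\left(\frac{(n+K-2)(2n+K-1)^2}{4\,(n+K/2-1)(n+K-1)(n+K/2)}\right)^{\epsilon_n}=\frac{K}{K-1},
\]
the first identity being the case $K=4m^2$ and the second the case $K=(4m+2)^2=16m(m+1)+4$, multiplied over $m$.

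First I would split the outer product over $m\geq1$ in the first identity into even and odd indices. For $m=2j$ one has $K=16j^2$, and the inner product above, after reindexing $j\mapsto m$, is exactly the $m$-th inner product occurring in the Corollary. For $m=2j+1$ one has $K=4(2j+1)^2=16j(j+1)+4$, and distributing the factors of $2$ shows that this inner product is exactly the $j$-th inner product of the second identity of Theorem~\ref{theo-prod}. Hence the left-hand side of the first identity equals (left-hand side of the Corollary) $\times$ (left-hand side of the second identity). Since the first identity equals $\pi/2$ and the second equals $\sqrt2$, the Corollary's product equals $\frac{\pi}{2\sqrt2}=\frac{\pi\sqrt2}{4}$.

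One can phrase the same computation on the value side via Wallis and Catalan: by the displayed identity the Corollary's product equals $\prod_{m\geq1}\frac{16m^2}{16m^2-1}$, which is the subproduct of Wallis' product $\frac{\pi}{2}=\prod_{k\geq1}\frac{(2k)^2}{(2k-1)(2k+1)}$ over even $k$, while the subproduct over odd $k$ is $\prod_{m\geq0}\frac{(4m+2)^2}{(4m+1)(4m+3)}=\sqrt2$; dividing gives $\frac{\pi\sqrt2}{4}$ again.

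Nothing here is deep; the two places to be careful are the bookkeeping that identifies the rational function in the Corollary with the $K=16m^2$ specialization above (in particular, pulling the factor $4$ out of the two denominator terms of the form $2n+\cdots$), and the regrouping of the infinite product over even and odd $m$. The latter is legitimate because $\frac{K}{K-1}=1+O(K^{-1})$ with $K$ growing quadratically in $m$, so the outer product converges absolutely and may be regrouped freely; I would only remark on this rather than dwell on it. The only mild obstacle is thus purely clerical — making sure every factor of $2$ lands in the right place when matching the three families of rational functions.
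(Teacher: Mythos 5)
Your proof is correct and is essentially the paper's own argument: split the outer product of the first identity of Theorem~\ref{theo-prod} into even and odd $m$, recognize the odd-$m$ part as the second identity, and divide to get $\frac{\pi/2}{\sqrt2}=\frac{\pi\sqrt2}{4}$. Your extra remarks (the unified $K/(K-1)$ formulation, the Wallis/Catalan value-side check, and the absolute-convergence justification for regrouping) are sound but not a different route.
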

	\begin{proof}
		We begin by splitting the product 
		\begin{displaymath}
		\prod_{m\geq1} \prod_{n\geq1} \left( \frac{(4 m^2 + n - 2) (4 m^2 + 2 n - 1)^2}{4 (2 m^2 + n - 1) (4 m^2 + n - 1) (2 m^2 + n)} \right) ^{\epsilon_n} = \frac{\pi}{2}
		\end{displaymath}
		into odd and even $m$. We notice that for odd indexes (i.e., $2m+1$), we have
		\begin{displaymath}
		\left( \frac{(16 m (m + 1) + n + 2) (16 m (m + 1) + 2 n + 3)^2}{4 (8 m (m + 1) + n + 1) (8 m (m + 1) + n + 2) (16 m (m + 1) + n + 3)} \right) ^{\epsilon_n},
		\end{displaymath}
		which is exactly the inner product in the second identity of Theorem \ref{theo-prod}. Thus, dividing  the first equation in Theorem \ref{theo-prod} by the second yields our third identity.
	\end{proof}

	\section{On the Flajolet-Martin constant}
	
	An interesting infinite product involving the $\pm 1$ Thue-Morse sequence appeared in a $1985$ paper by Flajolet and Martin \cite{FlajoletMartin85}:
	\begin{displaymath}
	R:=\prod_{n\geq1} \left( \frac{(4n+1) (4n+2)}{4n (4n+3)} \right) ^{\epsilon_n},
	\end{displaymath}
	intimately connected with the so-called ``Flajolet-Martin constant'' $\varphi := 2^{-1/2} e^\gamma \frac23 R$. The arithmetical nature of $\varphi$, and thus of $R$, remains a mystery. While it is easy to see that $R$ can be obtained from the product $\prod_{n\geq1} \left( \frac{2n}{2n+1} \right) ^{\epsilon_n}$ by splitting it into odd and even indexes and using the fact that $\epsilon_{2n}=\epsilon_{n}$ and $\epsilon_{2n+1}=-\epsilon_{n}$, not much else is known about it. In the remaining paragraphs of this note, we find another expression for $R$ using the same technique we used above.
	\begin{theorem}
		Let $R$ denote the product $\prod_{n\geq1} \left( \frac{(4n+1) (4n+2)}{4n (4n+3)} \right) ^{\epsilon_n}$. Then $R=$
		\begin{displaymath}
		\prod_{m\geq1} \prod_{n\geq1} \left( \frac{(8 m^2 + 6 m + n) (4 m (4 m + 3) + n - 1) (4 m (4 m + 3) + 2 n + 1)}{(8 m^2 + 6 m + n + 1) (4 m (4 m + 3) + n + 1) (4 m (4 m + 3) + 2 n - 1)} \right) ^{\epsilon_n \epsilon_m}.
		\end{displaymath}
	\end{theorem}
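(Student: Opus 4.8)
The plan is to peel the double product one layer at a time. First observe that, after the harmless renaming $n\mapsto m$,
\[
R=\prod_{m\geq1}\left(\frac{(4m+1)(4m+2)}{4m(4m+3)}\right)^{\epsilon_m}.
\]
Hence it suffices to show that for each fixed integer $m\geq1$, raising the inner term of the theorem to the exponent $\epsilon_n$ and multiplying over $n\geq1$ gives the single Flajolet--Martin factor $\frac{(4m+1)(4m+2)}{4m(4m+3)}$; raising that identity to the power $\epsilon_m$ and multiplying over all $m\geq1$ then reproduces $R$ on the right and, on the left, merges the outer exponent $\epsilon_m$ with the inner $\epsilon_n$ into the combined weight $\epsilon_n\epsilon_m$.

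To evaluate the inner product I would invoke Lemma \ref{lem-shallit} with the parameters
\[
a=4m(4m+3)-1=16m^2+12m-1,\qquad b=(4m+1)(4m+2)-1=16m^2+12m+1,
\]
which lie in $\mathbb C\setminus\{-1,-2,\ldots\}$ for $m\geq1$ and are chosen exactly so that $\frac{b+1}{a+1}=\frac{(4m+1)(4m+2)}{4m(4m+3)}$. Since $b=a+2$ and $a+1=4m(4m+3)$, the six linear factors appearing in Lemma \ref{lem-shallit} become $n+a=4m(4m+3)+n-1$, $\ 2n+a=4m(4m+3)+2n-1$, $\ 2n+a+1=2(8m^2+6m+n)$, $\ 2n+b=4m(4m+3)+2n+1$, $\ n+b=4m(4m+3)+n+1$, and $2n+b+1=2(8m^2+6m+n+1)$. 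The two factors of $2$ occur one in the numerator and one in the denominator, so they cancel, and what remains is precisely the inner term displayed in the statement. Thus $\prod_{n\geq1}(\cdots)^{\epsilon_n}=\frac{(4m+1)(4m+2)}{4m(4m+3)}$ for every $m\geq1$, and the theorem follows by the reduction above.

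I do not expect a serious obstacle here: the inner product converges for each $m$ by Lemma \ref{lem-shallit} itself, and the outer product over $m$ converges absolutely because its general factor is $1+\tfrac{2}{16m^2+12m}=1+O(m^{-2})$, so peeling the nested product one layer at a time is legitimate. The only real content is spotting the substitution $a=4m(4m+3)-1$, $b=(4m+1)(4m+2)-1$, which is essentially forced once one insists that $a+1$ and $b+1$ reproduce the denominator $4m(4m+3)$ and numerator $(4m+1)(4m+2)$ of the target factor; everything else is the factor-of-two bookkeeping just described.
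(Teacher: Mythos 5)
Your proof is correct and is essentially identical to the paper's: the paper also applies Lemma \ref{lem-shallit} with $a=4m(4m+3)-1$ and $b=(4m+1)(4m+2)-1$, raises the resulting identity to the power $\epsilon_m$, and multiplies over $m\geq1$, merging the exponents into $\epsilon_n\epsilon_m$. Your factor-of-two bookkeeping and the explicit convergence remark for the outer product are accurate additions but do not change the route.
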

	\begin{proof}
		Let $m\geq1$ be a positive integer. We take Lemma \ref{lem-shallit} with $a=4m (4m+3) -1$ and $b=(4m+1) (4m+2)-1$, so that our product equals $\frac{(4m+1) (4m+2)}{4m (4m+3)}$. Thus the left hand side becomes:
		\begin{displaymath}
		\prod_{n\geq1} \left( \frac{(16 m^2 + 12 m + n - 1) (8 m^2 + 6 m + n) (16 m^2 + 12 m + 2 n + 1)}{(16 m^2 + 12 m + 2 n - 1) (16 m^2 + 12 m + n + 1) (8 m^2 + 6 m + n + 1)} \right) ^{\epsilon_n}.
		\end{displaymath}
		Now taking the product over all $m\geq1$ and flipping each term following the sign of the $\epsilon_m$ sequence yields
		\begin{displaymath}
		\prod_{m\geq1} \left( \prod_{n\geq1} \left( \frac{(16 m^2 + 12 m + n - 1) (8 m^2 + 6 m + n) (16 m^2 + 12 m + 2 n + 1)}{(16 m^2 + 12 m + 2 n - 1) (16 m^2 + 12 m + n + 1) (8 m^2 + 6 m + n + 1)} \right) ^{\epsilon_n} \right) ^{\epsilon_m}.
		\end{displaymath}
		Since the exponents are always one of $\{1,-1\}$, we can join them together as $\epsilon_n \epsilon_m$, and after simplifying the expression within the double product our proof is complete.
	\end{proof}

	\section{Final remarks}
	The method we used to derive our results in the previous sections is easily adapted to other constants admitting an infinite product representation with rational terms. We state this remark as follows.
	\begin{remark}
		Considering the infinite product $\prod_{m\geq m_0}^{} b_m/a_m = C$, where $a_m$ and $b_m$ are nonzero integers for all $m\geq m_0$ and $C$ is a closed form expression comprised of known constants, we have
		\begin{displaymath}
		\prod_{m\geq m_0} \prod_{n\geq1} \left( \frac{(n+a_m-1) (2n + a_m) (2n+b_m-1)}{(2n + a_m-1) (n+b_m-1) (2n+b_m)} \right)^{\epsilon_{n}} = C.
		\end{displaymath}
	\end{remark}
	Note that one can prove this easily by setting $a=a_m-1$ and $b=b_m-1$ in Lemma \ref{lem-shallit} and then taking the product over all $m\geq m_0$. A few examples that quickly come to mind can be found in the book of Borwein, Bailey and Girgensohn \cite[pp. 4--6]{Borwein04}:
	\begin{align*}
	\prod_{n=2}^{} \frac{n^2-1}{n^2+1} &= \pi \cosh (\pi), \\
	\prod_{n=2}^{} \frac{n^3-1}{n^3+1} &= \frac23, \\
	\prod_{n=2}^{} \frac{n^4-1}{n^4+1} &= \frac{\pi \sinh (\pi)}{\cosh (\sqrt 2 \pi) - \cos (\sqrt 2 \pi)}, \\
	\end{align*}
	which result in some amusing identities involving the $\epsilon_n$ sequence.

\bigskip
\hrule
\bigskip

\noindent 2010 {\it Mathematics Subject Classification}: Primary 11A63; Secondary 68R15, 11Y60, 11B85. \\
\noindent \emph{Keywords: } Thue-Morse sequence, infinite product, Flajolet-Martin constant.

\bigskip
\hrule
\bigskip

\noindent (Concerned with sequences
\seqnum{A106400} and
\seqnum{A010060}.)

\bigskip
\hrule
\bigskip

\end{document}